\documentclass{article}

\usepackage{arxiv}

\usepackage[utf8]{inputenc} 
\usepackage[T1]{fontenc}    
\usepackage{hyperref}       
\usepackage{url}            
\usepackage{booktabs}       
\usepackage{amsfonts}       
\usepackage{nicefrac}       
\usepackage{microtype}      
\usepackage{lipsum}		
\usepackage{graphicx}
\usepackage{amsmath}
\usepackage{amssymb}
\usepackage{color}

\newtheorem{defi}{Definition}
\newcommand{\brdef}{\begin{defi}}
	\newcommand{\erdef}{\end{defi}}
\newtheorem{cor}{Corollary}
\newcommand{\bcor}{\begin{cor}}
	\newcommand{\ecor}{\end{cor}}
\newtheorem{proof}{Proof}
\newcommand{\bproof}{\begin{Proof}}
\newcommand{\eproof}{\end{Proof}}
\newtheorem{thm}{Theorem}
\newtheorem{lem}{Lemma}

\newcommand{\ble}{\begin{lem}}
	\newcommand{\ele}{\end{lem}}

\title{Special Finsler spaces admitting a semi-concurrent vector field}


\author{ M. R. Rajeshwari$^{*}$, S. K. Narasimhamurthy$^{**}$ and H. M. Manjunatha$^{***}$ \\
	Department of PG Studies and Research in Mathematics\\
	Kuvempu University, Jnana Sahyadri\\
	Shankaraghatta - 577 451, Shivamogga, Karnataka, India \\
	$^{*}$\texttt{rajeshwarimr27@gmail.com} \\
	$^{**}$\texttt{nmurthysk@gmail.com} \\
	$^{***}$\texttt{manjunathahmnmt@gmail.com} \\
}

\date{}


\hypersetup{
	pdftitle={Special Finsler spaces admitting a semi-concurrent vector field},
	pdfsubject={math.DG},
	pdfauthor={M. R. Rajeshwari, S. K. Narasimhamurthy, H. M. Manjunatha},
	pdfkeywords={Semi-concurrent vector fields, Special Finsler spaces, $C$-conformal condition},
}

\begin{document}
	\maketitle
	
	\begin{abstract}
		The main objective of this paper is to study semi-concurrent vector fields on a Finsler manifold. We show that the quasi-$C$-reducible Finsler space, $C3$-like Finsler space, $C^{h}$-recurrent Finsler space,  and $P2$-like Finsler space are equivalent to Riemannian if they admit a semi-concurrent vector field. Further, we prove the necessary and sufficient condition for a Finsler space satisfying $C$-conformal condition to become Riemannian.
	\end{abstract}

	\keywords{Semi-concurrent vector fields \and Special Finsler spaces \and $C$-conformal condition.}
	
	\textbf{Mathematics Subject Classification 2020} 53B40 $\cdot$ 53Z05 
\section{Introduction}
\par In the Finsler geometry, all geometrical quantities are direction-dependent. This feature makes Finsler geometry more suitable for studying physical theories like particle physics, relativistic optics, and general relativity. Finsler geometry has Riemannian geometry as a particular case. Recently, Finsler geometry was applied to study various astrophysical topics like wormhole models \cite{Manjunatha,Rahaman,Sanjay}, compact star models \cite{Rahaman2}, gravastar models \cite{Banerjee,Sanjay2} and so on. The obtained results agree well with the accuracy of experimental observations. Nekouee et al. \cite{Nekouee} have investigated the applications of the Finsler-Randers metric in
cosmology. The Finslerian Schwarzschild-de sitter space-time is investigated in \cite{Manjunatha2}. So, many researchers have been fascinated by this generalized geometry and studying the geometry of a Finsler manifold.

\par Brickell and Yano \cite{e2} studied the concept of concurrent vector fields on a Riemannian manifold. In 1950, Tachibana \cite{e17} defined and discussed the concurrent vector fields. Later, in 1974, Matsumoto and Eguchi \cite{e8} extended the study of concurrent vector fields. In 2004, Rastogi and Dwivedi \cite{e16} discussed the concurrent vector fields on a Finsler manifold.

\par Semi-concurrent vector fields are studied on a Finsler manifold in \cite{e19}. And we found from \cite{e19} that some special Finsler spaces admitting semi-concurrent vector fields reduce to Riemannian. Further, they \cite{e19} discussed some properties of concurrent vector fields. Youssef \textit{et al.} \cite{e18} studied the concurrent $\pi$-vector fields on some special Finsler spaces. And found that the special Finsler spaces admitting a concurrent $\pi$- vector field reduce to a Riemannian.

\par In this paper, we study some results on the Finsler spaces satisfying the $C$-conformal condition. Also, we discuss some special Finsler spaces admitting a semi-concurrent vector field.
	
\section{Notations and preliminaries}
\par Let $(M, F)$ be an $n$-dimensional smooth connected Finsler manifold; $F$ being the Finsler function. Let $(x^i)$ be the coordinates of any point of the base manifold $M$ and $(y^i)$ a supporting element at the same point. We use the following terminology and notations:\\
$\partial_{i}$: partial differentiation with respect to $x^i$,\\
$\dot{\partial}_{i}$: partial differentiation with respect to $y^i$ (basis vector fields of the vertical bundle),\\
$g_{ij}:= \frac{1}{2}\dot{\partial}_{i}\dot{\partial}_{j}F^2 = \dot{\partial}_{i}\dot{\partial}_{j}E$: the Finsler metric tensor, where $E = \frac{1}{2}F^2$ is the energy function,\\
$l_{i}:= \dot{\partial}_{i}F = g_{ij}l^{j} = g_{ij}\frac{y^j}{F}$: the normalized supporting element; $l^{i}:= \frac{y^i}{F}$,\\
$l_{ij}:= \dot{\partial}_{i}l_{j}$,\\
$h_{ij}:= Fl_{ij} = g_{ij} - l_{i}l_{j}$: the angular metric tensor,\\
$C_{ijk}:= \frac{1}{2}\dot{\partial}_{k}g_{ij} = \frac{1}{4}\dot{\partial}_{i}\dot{\partial}_{j}\dot{\partial}_{k}F^2$: the Cartan tensor,\\
$C^{i}_{jk}:= g^{ri}C_{rjk} = \frac{1}{2}g^{ri}\dot{\partial}_{k}g_{rj}$: the $h(hv)$-torsion tensor,\\
$G^{i}$:= the components of the geodesic spray associated with $(M, F)$,\\
$N^{i}_{j}:= \dot{\partial}_{j}G^{i}$: the Barthel (or Cartan nonlinear) connection associated with $(M, F)$,\\
$\delta_{i}:= \partial_{i} - N^{r}_{i}\dot{\partial}_{r}$: the basis vector fields of the horizontal bundle,\\
$G^{i}_{jh}:= \dot{\partial}_{h}N^{i}_{j} = \dot{\partial}_{h}\dot{\partial}_{j}G^{i}$: the coefficients of Berwald connection,\\
$\Gamma^{i}_{jk}:= \frac{1}{2}g^{ir}(\delta_{j}g_{kr} + \delta_{k}g_{jr} - \delta_{r}g_{jk})$: the Christoffel symbols with respect to $\delta_{i}$,\\
$(\Gamma^{i}_{jk}, N^{i}_{j}, C^{i}_{jk})$: the Cartan connection.

\par For the Cartan connection $(\Gamma^{i}_{jk}, N^{i}_{j}, C^{i}_{jk})$, we define\\
$X^{i}_{j|k}:= \delta_{k}X^{i}_{j} + X^{m}_{j}\Gamma^{i}_{mk} - X^{i}_{m}\Gamma^{m}_{jk}$: the horizontal covariant derivative of $X^{i}_{j}$,\\
$X^{i}_{j}|k:= \dot{\partial}_{k}X^{i}_{j} + X^{m}_{j}C^{i}_{mk} - X^{i}_{m}C^{m}_{jk}$: the vertical covariant derivative of $X^{i}_{j}$.

\begin{defi}
	\cite{e1} A Finsler structure on a manifold $M$ is a function
	\begin{equation*}
	F:TM\longrightarrow [0, \infty)
	\end{equation*}
	with the following properties:
\begin{enumerate}
	\item $F$ is $C^\infty$ on the slit tangent bundle $\mathcal{T}M:=TM\setminus{0}$.
	\item $F(x, y)$ is positively homogenous of degree one in $y:F(x, \lambda y) = \lambda F(x, y)$ for all $y\in TM$ and $\lambda > 0$.
	\item The Hessian matrix $g_{ij}(x, y):=\dot{\partial}_{i}\dot{\partial}_{j}E$ is positive-definite at each point of $\mathcal{T}M$, where $E:= \frac{1}{2}F^2$ is the energy function of the Lagrangian $F$.
\end{enumerate}
The pair $(M, F)$ is called a Finsler manifold and the symmetric bilinear form $g = g_{ij}(x, y)dx^{i}\bigotimes dx^{j}$ is called the Finsler metric tensor of the Finsler manifold $(M, F)$.\\
Sometimes, a function $F$ satisfying the above conditions is said to be a regular Finsler metric.
\end{defi}
Next, we will provide the definitions of some special Finsler spaces.
\begin{defi}\cite{e6}
	A Finsler manifold $(M, F)$ of dimension $n \geq 3$ is called a $C$-reducible if the Cartan tensor $C_{ijk}$ has the following form:
	\begin{equation}\label{e:e1}
	C_{ijk} = \frac{1}{n+1}(h_{ij}C_{k} + h_{ki}C_{j} + h_{jk}C_{i}).
	\end{equation}
\end{defi}
\begin{defi}\cite{e9}
	A Finsler manifold $(M,F)$ of dimension $n \geq 3$ is called a semi-$C$-reducible if the Cartan tensor $C_{ijk}$ is written in the following form:
	\begin{equation}\label{e:e2}
	C_{ijk} = \frac{r}{n+1}(h_{ij}C_{k} + h_{ki}C_{j} + h_{jk}C_{i}) + \frac{t}{C^{2}}C_{i}C_{j}C_{k},
	\end{equation}
	where $r$ and $t$ are scalar functions such that $r+t=1$.
\end{defi}
\begin{defi}\cite{e13}
	A Finsler manifold $(M,F)$ of dimension $n \geq 3$ is called a quasi-$C$-reducible if the Cartan tensor $C_{ijk}$ has the following form:
	\begin{equation}\label{e:e3}
	C_{ijk} = h_{ij}C_{k} + h_{ki}C_{j} + h_{jk}C_{i}.
	\end{equation}
\end{defi}
In a three-dimensional Finsler space, $F^3$, $C_{ijk}$ is always written in the form
\begin{equation}\label{e:e4}
LC_{ijk} = Hm_{i}m_{j}m_{k} - J \mathcal{C}_{ijk}(m_{i}m_{j}m_{k}) + I\mathcal{C}_{ijk}(m_{i}m_{j}m_{k}) + Jn_{i}n_{j}n_{k}
\end{equation}
where $L=L(x, y)$ is the fundamental function, $\mathcal{C}_{(ijk)}\left\lbrace \right\rbrace $ denote the cyclic permutation of indices $i, j, k$ and addition. $H, I$ and $J$ are main scalars and $(I_{i}, m_{i}, n_{i})$ is Moor's frame \cite{e6,e13,e10}. Here $I_{i} = \dot{\partial}_{i}L$ is the unit vector along the element of support, $m_{i}$ is the unit vector along $C_{i}$, that is, $m_{i} = C_{i}/C$, where $C^2 = g^{ij}C_{i}C_{j}$ and $n_{i}$ is a unit vector orthogonal to the vectors $I_{i}$ and $m_{i}$. Since the angular metric tensor $h_{ij}$ in $F^3$ can be written as
\begin{equation}\label{e:e5}
h_{ij} = m_{i}m_{j} + n_{i}n_{j},
\end{equation}
we may write \eqref{e:e4} as
\begin{equation}\label{e:e6}
C_{ijk} = \mathfrak{C}_{(ijk)}\left\lbrace h_{ij}a_{k} + C_{i}C_{j}b_{k}\right\rbrace .
\end{equation}
where $a_{k} = \frac{1}{L}(Im_{k} + \frac{J}{3}n_{k})$ and $b_{k} = \frac{1}{LC^2}\left\lbrace (\frac{H}{3} - I)m_{k} + \frac{4J}{3}n_{k}\right\rbrace$. Many authors \cite{e8,e6,e9,e14} have obtained various interesting special forms of $C_{ijk}$.
\begin{defi}\cite{e15}
	A Finsler space $F^{n}$ of dimension $n \geq 4$ is called a $C3$-like Finsler space if there exist covariant vector fields $a_{k}$ and $b_{k}$ in $F^{n}$ such that its $(h)$ $hv$-torsion tensor $C_{ijk}$ can be written as
	\begin{equation}\label{e:e7}
	C_{ijk} = \mathfrak{C}_{(ijk)}\left\lbrace h_{ij}a_{k} + C_{i}C_{j}b_{k}\right\rbrace.
	\end{equation}
	Since $C_{ijk}$ is an indicatory tensor, it follows that $a_{k}$ and $b_{k}$ are indicatory tensors, that is, $a_{0} = b_{0} = 0$. If $b_{k}$ is a null vector then contracting \eqref{e:e7} with $g^{ik}$ and putting $b_{k} = 0$, we get $a_{k} = \frac{1}{n + 1}C_{k}$.
\end{defi}

\begin{defi}\cite{e1}
	A tangent vector field $X$ of a Finsler space $F^{n}$ is concurrent under the Cartan connection, if
	\begin{equation}\label{e:e8}
	X^{i}_{|j} = \partial_{j}X^{i} - N^{h}_{j}\dot{\partial}_{h}X^{i} + X^{h}F^{i}_{hj} = \partial_{j}X^{i} + X^{h}F^{i}_{hj} = -\delta^{i}_{j}
	\end{equation}
	\begin{equation}\label{e:e9}
	X^{i}_{|j} = \dot{\partial}_{j}X^{i} + X^{h}C^{i}_{hj} = X^{h}C^{i}_{hj} = 0.
	\end{equation}
	where $\partial_{j}$ and $\dot{\partial}_{j}$ denote partial differentiations by $X^{j}$ and $Y^{j}$ respectively.
\end{defi}
The Ricci identities \cite{e7} in a Finsler space are as follows:
\begin{equation}\label{e:e10}
X^{h}R_{hijk} = 0,
\end{equation}
\begin{equation}\label{e:e11}
X^{h}P_{hijk} + C_{ijk} = 0,
\end{equation}
\begin{equation}\label{e:e12}
X^{h}S_{hijk} = 0,
\end{equation}
where $R_{hijk}$, $P_{hijk}$ and $S_{hijk}$ are the components of the curvature tensors of $C\Gamma$. Since $P_{hijk}$ are skew symmetric in $h$ and $i$, we have from \eqref{e:e9} that
\begin{equation}\label{e:e13}
X^{h}C_{ijk} = 0.
\end{equation}
We know that the well-known identity,
\begin{equation}\label{e:e14}
P_{hijk} = C_{ijk|h} - C_{hjk|i} + C_{hjr}C^{r}_{ik|0} - C_{ijr}C^{r}_{hk|0}.
\end{equation}
Substitute \eqref{e:e14} in \eqref{e:e9}, we get
\begin{equation}\label{e:e15}
X^{h}(P_{hijk} - C_{ijk|h} + C_{hjk|i} - C_{hjr}C^{r}_{ik|0} + C_{ijr}C^{r}_{hk|0}) C^{i}_{hj} = 0.
\end{equation}
We know that,
\begin{equation}\label{e:e16}
C_{ijk|h} = C^{r}_{jk|0} - C^{r}_{ih|0} + C_{kir}C^{r}_{jh|0} - C_{ijr}C^{r}_{hk|0} - P_{ijkh}.
\end{equation}
Contracting the above equation, we get
\begin{equation}\label{e:e17}
X^{h}C_{ijk|h} = 0.
\end{equation}

\begin{defi}\cite{e4}
	A Finsler space $F^{n}$ is called a $C^{h}$-recurrent if the torsion tensor satisfies the following equation:
	\begin{equation}\label{e:e18}
	C_{ijk|h} = C_{ijk}K_{h},
	\end{equation}
	where $K_{h}$ is a covariant vector field.
\end{defi}

\begin{defi}\cite{e4}
	For a $P2$-like Finsler space $F^{n}$, we have
	\begin{equation}\label{e:e19}
	P_{hijk} = K_{h}C_{ijk} - K_{i}C_{kjh},
	\end{equation}
	where $K_{h}$ is a covariant vector field.
\end{defi}

\begin{defi}\cite{e4}
	A Finsler space $F^{n}$ is called a $P$-reducible if the torsion tensor $P_{ijk}$ is defined as follows:
	\begin{equation}\label{e:e20}
	P_{ijk} = \frac{1}{n+1}(h_{ij}P_{k} + h_{jk}P_{i} + h_{ki}P_{j}),
	\end{equation}
	where $P_{i} = P^{r}_{ir} = C_{i}$.
\end{defi}

\begin{defi}\cite{e4}
	A Finsler space $F^{n}$ is said to satisfy $T$-condition if the $T$-tensor $T_{hijk}$ vanishes identically, that is,
	\begin{equation}\label{e:e22}
	T_{hijk} = LC_{hij|k} + l_{h}C_{ijk} + l_{i}C_{hjk} + l_{j}C_{hik} + l_{k}C_{hij}=0.
	\end{equation}
\end{defi}

\section{Semi-concurrent vector fields}
Let $(M, F)$ be an $n$-dimensional smooth Finsler manifold.
\begin{defi}\cite{e17}
A vector field $X^i(x)$ on $M$ is said to be concurrent under the Cartan connection if it satisfies the following expression:
\begin{equation}\label{e:e23}
X^{h}(x)C_{hij} = 0,\,\,\,\,X^{i}_{|j} = -\delta^{i}_{j}.
\end{equation}
The condition \eqref{e:e23} is called the $C$-condition.
\end{defi}
\begin{defi}\cite{e6}
	The manifold $M$ fulfills the $C$-conformal condition if there exists a conformal transformation $\bar{F} = e^{\sigma(x)}F$ on $M$ such that
	\begin{equation}\label{e:e24}
	\sigma_{h}(x)C^{h}_{ij} = 0,
	\end{equation}
	where $\sigma_{h}:= \frac{\partial \sigma}{\partial x^{h}}$. The condition \eqref{e:e24} will be called the $CC$-condition.
\end{defi}
\begin{defi}\cite{e19}
	A vector field $B^{i}(x)$ on $M$ is said to be semi-concurrent if it satisfies the following expression:
	\begin{equation}\label{e:e25}
	B^{h}(x)C_{hij} = 0.
	\end{equation}
	The condition \eqref{e:e25} will be called the $SC$-condition.
\end{defi}

\begin{lem}\cite{e19}
		For the nonzero function $B^i$ satisfying \eqref{e:e25}, if the scalars $\alpha$ and $\alpha'$ satisfy
	\begin{equation}\label{e:e26}
	\alpha B^i + \alpha'y^i = 0,
	\end{equation}
	then $\alpha = \alpha' = 0$, which means that the two vector fields $B^i(x)$ and $y^i$ are independent.
\end{lem}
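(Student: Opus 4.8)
The plan is to argue by contradiction, exploiting the single structural feature that distinguishes the two vectors appearing in \eqref{e:e26}: $B^{i}=B^{i}(x)$ depends on the base point only, whereas $y^{i}$ is the fibre (supporting) coordinate on $\mathcal{T}M$, so at a fixed $x$ the vector $B^{i}(x)$ is frozen while $y^{i}$ ranges freely over $T_{x}M\setminus\{0\}$.

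First I would reduce the statement to pointwise linear algebra. Since each fibre $T_{x}M$ has dimension $n\geq 2$ and $B^{i}(x)\neq 0$, the set
\begin{equation*}
U:=\{(x,y)\in\mathcal{T}M:\ y^{i}\ \text{is not proportional to}\ B^{i}(x)\}
\end{equation*}
is open and dense in $\mathcal{T}M$, being fibrewise the complement of a punctured line. At any point of $U$ the vectors $B^{i}(x)$ and $y^{i}$ are linearly independent in $T_{x}M$, so \eqref{e:e26}, being a vanishing linear combination of two independent vectors, forces $\alpha=\alpha'=0$ there. Hence $\alpha$ and $\alpha'$ vanish on $U$, and by smoothness they vanish on all of $\mathcal{T}M$; this is exactly the asserted independence of $B^{i}$ and $y^{i}$.

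If instead one prefers a computational route avoiding the density argument, one can apply $\dot{\partial}_{j}$ to \eqref{e:e26}: using $\dot{\partial}_{j}B^{i}=0$ and $\dot{\partial}_{j}y^{i}=\delta^{i}_{j}$ one gets $(\dot{\partial}_{j}\alpha)B^{i}+(\dot{\partial}_{j}\alpha')y^{i}+\alpha'\delta^{i}_{j}=0$. On the open set where $\alpha\neq 0$, substitute $B^{i}=-(\alpha'/\alpha)y^{i}$; examining the $i\neq j$ and $i=j$ components in turn forces first $-\alpha'/\alpha$ to be independent of $y$ and then $-\alpha'/\alpha\equiv 0$, i.e. $\alpha'\equiv 0$ and $\alpha B^{i}\equiv 0$ there, which (as $B^{i}\neq 0$) is impossible unless that open set is empty. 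So $\alpha\equiv 0$ on $\mathcal{T}M$, and then \eqref{e:e26} reduces to $\alpha'y^{i}=0$, whence $\alpha'\equiv 0$ as well, $y^{i}$ being nonzero on $\mathcal{T}M$.

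The one point to treat with care — the ``main obstacle'', such as it is — is that $B^{i}(x)$ and $y^{i}$ are only generically linearly independent: they genuinely coincide up to scale along the line $y\parallel B(x)$ in each fibre, so the vanishing of $\alpha,\alpha'$ must be established on the dense set $U$ first and only then propagated by continuity. I would also remark that the $SC$-condition \eqref{e:e25} is used here only implicitly, to legitimise $B^{i}$ as a nonzero admissible vector field: contracting \eqref{e:e26} with the Cartan tensor is unhelpful, since $B^{i}C_{ijk}=0$ by \eqref{e:e25} and the total symmetry of $C_{ijk}$, while $y^{i}C_{ijk}=0$ by the indicatory property, so such a contraction only yields $0=0$.
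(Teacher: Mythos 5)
The paper does not actually prove this lemma --- it is quoted from \cite{e19} with no argument attached --- so there is no internal proof to compare against; judged on its own terms, your proposal is correct. Your first route (the locus $y\parallel B(x)$ is a punctured line in each fibre, so its complement $U$ is open and dense for $n\geq 2$, a vanishing combination of two pointwise independent vectors has zero coefficients on $U$, and these vanish everywhere by continuity) is clean and complete. Your second route is essentially the standard argument from the cited source, whose only structural input is $\dot{\partial}_{j}B^{i}=0$ versus $\dot{\partial}_{j}y^{i}=\delta^{i}_{j}$. If you want to tighten the component-chasing there, observe that on the set $\{\alpha\neq 0\}$ the relation you derive can be rewritten as
\begin{equation*}
\alpha'\,\delta^{i}_{j}=-\alpha\,\bigl(\dot{\partial}_{j}(\alpha'/\alpha)\bigr)\,y^{i},
\end{equation*}
whose right-hand side has rank at most one while the left-hand side has rank $n$ unless $\alpha'=0$; for $n\geq 2$ this forces $\alpha'=0$ at once, hence $\alpha B^{i}=0$ and $B^{i}=0$ there, a contradiction --- and this avoids splitting into cases according to which components of $y$ vanish. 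Two of your side remarks deserve emphasis: the statement genuinely needs $n\geq 2$ (in a one-dimensional fibre any two nonzero vectors are proportional, so the lemma would be false), and the $SC$-condition \eqref{e:e25} contributes nothing to the proof beyond supplying a nonzero $B^{i}(x)$, since $B^{h}C_{hij}=0$ and $y^{h}C_{hij}=0$ make any contraction of \eqref{e:e26} with the Cartan tensor an empty identity.
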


\begin{thm}
	A quasi-$C$-reducible Finsler space $(M, F)$ admitting a semi-concurrent vector field is Riemannian.
\end{thm}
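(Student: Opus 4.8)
The plan is to transvect the quasi-$C$-reducibility identity \eqref{e:e3} with the semi-concurrent field $B^i$, use the $SC$-condition \eqref{e:e25} to strip $C_{ijk}$ down to its torsion vector $C_i$, and then show that $C_i$ itself must vanish. Two elementary facts will be used throughout: $C_i y^i = 0$ (indicatoriness of the Cartan tensor, since $C_{ijk}y^k=0$) and $h_{ij}y^j = 0$, from which $l^i C_i = 0$ follows.

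First I would contract \eqref{e:e3} with $B^i$. Writing $b_j := B^i h_{ij}$ for the angular projection of $B$ and $\beta := B^i C_i$, the left-hand side is $B^i C_{ijk} = 0$ by \eqref{e:e25}, so one is left with $b_j C_k + b_k C_j + \beta\, h_{jk} = 0$. Contracting this relation with $g^{jk}$ and using $g^{jk}h_{jk} = n-1$ together with $g^{jk}b_j C_k = b^k C_k = \beta$ (the last equality because $l^i C_i = 0$), everything collapses to $(n+1)\beta = 0$, hence $B^i C_i = \beta = 0$. Substituting $\beta = 0$ back into the displayed relation gives $b_j C_k + b_k C_j = 0$, and contracting once more with $B^j$ (again using $B^j C_j = \beta = 0$) yields $\big(h_{ij}B^i B^j\big)\,C_k = 0$.

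Finally, $h_{ij}B^i B^j \geq 0$ because $h_{ij}$ is positive semi-definite, and it vanishes precisely when $B^i$ is proportional to $y^i$. But the preceding Lemma (equation \eqref{e:e26}) guarantees that $B^i$ and $y^i$ are linearly independent, so $h_{ij}B^i B^j > 0$; therefore $C_k = 0$ for every $k$. Inserting $C_i = 0$ into \eqref{e:e3} gives $C_{ijk} = 0$, i.e. the Cartan tensor vanishes identically, which is exactly the condition for $(M,F)$ to be Riemannian.

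I expect the only delicate step to be the non-vanishing of $h_{ij}B^i B^j$: this is where the independence Lemma is indispensable, since without it the argument would only yield the dichotomy ``either $(M,F)$ is Riemannian or $B^i$ is parallel to $y^i$''. The remaining steps are routine index contractions built on the indicatory identities for $C_{ijk}$ and $h_{ij}$ and on the trace $g^{jk}h_{jk}=n-1$.
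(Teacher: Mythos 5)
Your proposal is correct and follows essentially the same route as the paper: transvect the quasi-$C$-reducibility identity \eqref{e:e3} with the semi-concurrent field, use the $SC$-condition \eqref{e:e25} to reduce everything to $\left(h_{ij}B^{i}B^{j}\right)C_{k}=0$, and conclude $C_{k}=0$ from the non-vanishing of $h_{ij}B^{i}B^{j}$. You are in fact more explicit than the printed proof, which jumps straight to $B^{i}B^{j}h_{ij}C_{k}=0$ without noting that the cross terms vanish because $B^{i}C_{i}=0$, and which asserts $C_{k}=0$ without invoking the independence Lemma to justify $h_{ij}B^{i}B^{j}\neq 0$.
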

\begin{proof}
For a quasi-$C$-reducible Finsler space, we have
\begin{equation}\label{e:e27}
C_{ijk} = h_{ij}C_{k} + h_{jk}C_{i} + h_{ki}C_{j}.
\end{equation}
Contracting \eqref{e:e27} by $B^iB^j$ and by using (\ref{e:e25}), we get
\begin{equation}\label{e:e29}
B^iB^jh_{ij}C_{k} = 0.
\end{equation}
This implies that $C_{k} = 0$. Hence the space is Riemannian.
\end{proof}

\begin{thm}
	A $C3$-like Finsler space satisfying the $SC$-condition is Riemannian provided $J=0$.
\end{thm}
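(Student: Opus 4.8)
The plan is to mirror the proof of Theorem~1, the additional work being to dispose of the two groups of terms that the $C3$-like form \eqref{e:e7} carries beyond the quasi-$C$-reducible form. Write \eqref{e:e7} out as $C_{ijk}=h_{ij}a_k+h_{jk}a_i+h_{ki}a_j+C_iC_jb_k+C_jC_kb_i+C_kC_ib_j$. The first step is to trace the $SC$-condition: contracting \eqref{e:e25} with $g^{ij}$ gives $B^hC_h=0$, i.e.\ a semi-concurrent vector field is orthogonal to the torsion vector $C_h$ (one may instead contract \eqref{e:e7} with $g^{ij}$, using $a_0=b_0=0$ and $g^{ij}h_{ij}=n-1$, together with \eqref{e:e25}).

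Next I would contract \eqref{e:e7} with $B^iB^j$. By total symmetry of $C_{ijk}$ the $SC$-condition forces $B^iB^jC_{ijk}=0$, so the left-hand side vanishes. On the right-hand side each of the three terms of the type $C_iC_jb_k$ carries a factor $B^hC_h$ and drops out by the first step, while the two $a$-cross terms collapse to $2(B^ih_{ik})(B^ja_j)$. Here the hypothesis $J=0$ is used: its effect, in the three-dimensional model \eqref{e:e6} on which the $C3$-like class is patterned, is to make $a_k=\tfrac{I}{LC}\,C_k$ and $b_k$ likewise proportional to $C_k$; so $B^ka_k=\tfrac{I}{LC}B^kC_k=0$ and the cross terms also vanish. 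We are left with $(B^iB^jh_{ij})\,a_k=0$. By the Lemma $B^i$ and $y^i$ are linearly independent, and since $g_{ij}$ is positive-definite the strict Cauchy--Schwarz inequality gives $B^iB^jh_{ij}=g_{ij}B^iB^j-(l_iB^i)^2>0$; hence $a_k=0$, exactly as $C_k=0$ was extracted from $B^iB^jh_{ij}C_k=0$ in Theorem~1.

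With $a_k=0$, \eqref{e:e7} reduces to $C_{ijk}=C_iC_jb_k+C_jC_kb_i+C_kC_ib_j$, and since $J=0$ also gives $b_k=\mu C_k$ for a scalar $\mu$, this becomes $C_{ijk}=3\mu\,C_iC_jC_k$; contracting with $g^{jk}$ yields $C_i\bigl(1-3\mu C^2\bigr)=0$ with $C^2=g^{jk}C_jC_k$. The crux, and the step I expect to be the genuine obstacle, is to exclude the residual possibility $3\mu C^2=1$, i.e.\ the $C2$-like form $C_{ijk}=C_iC_jC_k/C^2$, and conclude $C_i=0$: this needs more than the bare $SC$-condition, so I would feed the reduced form back into \eqref{e:e25} together with the dimensional restriction $n\ge 4$ built into the $C3$-like class. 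Once $C_i=0$ the Cartan tensor vanishes identically and $(M,F)$ is Riemannian.
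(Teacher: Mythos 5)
Your argument tracks the paper's through the key intermediate step: you contract \eqref{e:e7} with $B^iB^j$, kill the $b$-terms via $B^hC_h=0$, and arrive at \eqref{e:e31}, exactly as the paper does; your observation that $B^iB^jh_{ij}=g_{ij}B^iB^j-(l_iB^i)^2>0$ (Cauchy--Schwarz plus the Lemma) is the justification the paper leaves implicit. The divergence, and the gap, comes at the end. The paper substitutes the Moor-frame expressions $a_k=\tfrac1L\left(Im_k+\tfrac{J}{3}n_k\right)$ into \eqref{e:e31} to obtain \eqref{e:1}, and with $J=0$ reads off $C_k=0$ directly from $\tfrac{B^{2}I}{LC}C_k=0$; it never passes through ``$a_k=0$'' as an intermediate conclusion. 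You instead stop at $a_k=0$, reduce the Cartan tensor to $3\mu\,C_iC_jC_k$, and are then left having to exclude $3\mu C^2=1$. That residual case is a genuine dead end for the tool you propose: feeding $C_{ijk}=3\mu\,C_iC_jC_k$ back into \eqref{e:e25} gives $3\mu\,(B^hC_h)C_iC_j$, which vanishes identically because $B^hC_h=0$, so the $SC$-condition imposes no further constraint, and the dimensional restriction $n\ge 4$ never actually enters your computation. Your proof is therefore incomplete at precisely the step you flag as the obstacle.

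Your detour does expose a real sensitivity: $a_k=0$ together with $J=0$ only says $\tfrac{I}{LC}C_k=0$, so one must either take the branch $C_k=0$ (as the paper does when it concludes from \eqref{e:1}, in effect treating the coefficient $I$ as nonvanishing) or confront the $C2$-like residue you describe. But as a proof of the stated theorem, what you have written does not close. To match the paper you should conclude $C_k=0$, not merely $a_k=0$, from the vanishing of $\tfrac{B^{2}I}{LC}C_k$ in \eqref{e:1}, rather than attempting to rule out the $C2$-like form by re-using the $SC$-condition, which cannot succeed.
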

\begin{proof}
	For the $C_{3}$-like Finsler space, we have
	\begin{equation}\label{e:e30}
	C_{ijk} = h_{ij}a_{k} + C_{i}C_{j}b_{k} + h_{jk}a_{i} + C_{j}C_{k}b_{i} + h_{ki}a_{j} + C_{k}C_{i}b_{j}.
	\end{equation}
Contracting \eqref{e:e30} by $B^iB^j$ and from \eqref{e:e25}, we get
\begin{equation}\label{e:e31}
B^iB^j(h_{ij}a_{k} + h_{jk}a_{i} + h_{ki}a_{j}) = 0.
\end{equation}
Equation (\ref{e:e31}) implies that
\begin{equation}\label{e:e36}
B^iB^jn_{i}n_{j}a_{k} + B^jB^kn_{j}n_{k}a_{i} + B^kB^in_{k}n_{i}a_{j} = 0,
\end{equation}
which yields
\begin{equation}\label{e:1}
    \frac{B^{2}I}{LC}C_{k}+\frac{B^{2}J}{L}n_{k}=0.
\end{equation}
In view of (\ref{e:1}), we obtain that $C_{k}=0$ if $J=0$.
\end{proof}
%
%

\begin{thm}
	A $C^{h}$-recurrent Finsler space admitting a semi-concurrent vector field is Riemannian if $1+B^{h}K_{h} \neq 0$.
\end{thm}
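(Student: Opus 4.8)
The plan is to convert the $C^{h}$-recurrence into a statement about the $hv$-curvature tensor $P_{hijk}$ and then play it against the Ricci identity \eqref{e:e11}. I would begin from the identity \eqref{e:e14} and substitute the recurrence \eqref{e:e18}, $C_{ijk|h}=K_{h}C_{ijk}$ (and likewise $C_{hjk|i}=K_{i}C_{hjk}$). Since the Cartan connection is metric-compatible, $g_{ij|h}=0$, index raising commutes with the horizontal derivative, so $C^{r}_{ik|h}=K_{h}C^{r}_{ik}$ and hence $C^{r}_{ik|0}=K_{0}C^{r}_{ik}$ with $K_{0}:=K_{h}y^{h}$. This rewrites \eqref{e:e14} as
\[
P_{hijk}=K_{h}C_{ijk}-K_{i}C_{hjk}+K_{0}\bigl(C_{hjr}C^{r}_{ik}-C_{ijr}C^{r}_{hk}\bigr),
\]
which one may check is indeed skew-symmetric in $h,i$, consistent with the known symmetry of $P_{hijk}$.

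Next I would contract with the semi-concurrent field $B^{h}$. By the $SC$-condition \eqref{e:e25} and the total symmetry of $C_{ijk}$ we have $B^{h}C_{hjk}=0$ in every slot, and raising an index gives $B^{h}C^{r}_{hk}=g^{rm}B^{h}C_{mhk}=0$ as well. Hence the second term and both $K_{0}$-terms vanish, leaving just $B^{h}P_{hijk}=(B^{h}K_{h})\,C_{ijk}$. Comparing this with the Ricci identity \eqref{e:e11}, which for the semi-concurrent field gives $B^{h}P_{hijk}=-C_{ijk}$, yields $(B^{h}K_{h})C_{ijk}=-C_{ijk}$, i.e.
\[
\bigl(1+B^{h}K_{h}\bigr)C_{ijk}=0 .
\]
Since $1+B^{h}K_{h}\neq 0$ by hypothesis, $C_{ijk}=0$, so the Cartan tensor vanishes identically and $(M,F)$ is Riemannian.

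The calculation itself is just index bookkeeping; the points that need care are the legitimacy of invoking the Ricci identity \eqref{e:e11} for a semi-concurrent (rather than fully concurrent) vector field, and the fact that it is essential to insert the recurrence into \eqref{e:e14} \emph{before} contracting with $B^{h}$: the $C^{r}_{hk|0}$-term is not killed by $B^{h}$ on its own, whereas after the substitution it becomes $K_{0}C_{ijr}(B^{h}C^{r}_{hk})=0$. I expect the final write-up to amount to little more than the three displayed equations above.
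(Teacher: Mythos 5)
Your proposal is correct and follows essentially the same route as the paper: combine the identity \eqref{e:e14} for $P_{hijk}$ with the recurrence \eqref{e:e18} and the $SC$-condition to obtain $B^{h}P_{hijk}=(B^{h}K_{h})C_{ijk}$, then compare with the Ricci identity $B^{h}P_{hijk}=-C_{ijk}$ to conclude $(1+B^{h}K_{h})C_{ijk}=0$. The only difference is bookkeeping: you substitute the recurrence into \eqref{e:e14} \emph{before} contracting with $B^{h}$, which makes the vanishing of the $C^{r}_{hk|0}$ and $C_{hjk|i}$ terms transparent, whereas the paper contracts first and drops those terms without comment.
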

\begin{proof}
	For the $C^{h}$-recurrent Finsler space, we have
	\begin{equation}\label{e:e35}
	C_{ijk|h} = C_{ijk}K_{h},
	\end{equation}
	where $K_{h}$ is a covariant vector field.
Consider
\begin{equation}\label{e:2}
P_{hijk} = C_{ijk|h}-C_{hjk|i}+C_{hjr}C^{r}_{ik|0}-C_{ijr}C^{r}_{hk|0}.
\end{equation}
Contracting (\ref{e:2}) by $B^{h}$ yields
\begin{equation}\label{e:3}
B^{h}P_{hijk} = B^{h}C_{ijk|h}.
\end{equation}
From Ricci identity, (\ref{e:3}) implies that
\begin{equation}\label{e:4}
B^{h}C_{ijk|h} = -C_{ijk}.
\end{equation}
On contracting (\ref{e:e35}) by $B^{h}$ and by using (\ref{e:4}), we conclude that $C_{ijk}=0$ and hence the space is Riemannian if $1+B^{h}K_{h}\neq0$.
\end{proof}


\begin{thm}
	A $P2$-like Finsler space admitting a semi-concurrent vector field is Riemannian provided $1+B^{h}K_{h}\neq0$.
\end{thm}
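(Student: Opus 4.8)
The plan is to run the same double-contraction argument used for the $C^{h}$-recurrent case in Theorem 3: contract the $P2$-like identity \eqref{e:e19} with the semi-concurrent field $B^{h}$ to get one expression for $B^{h}P_{hijk}$, obtain a second expression for $B^{h}P_{hijk}$ from the general torsion identity \eqref{e:e14} together with the Ricci identities, and then compare the two.

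First I would contract the defining relation $P_{hijk}=K_{h}C_{ijk}-K_{i}C_{kjh}$ of a $P2$-like space by $B^{h}$. The term $B^{h}C_{kjh}$ vanishes by the $SC$-condition \eqref{e:e25}, so $B^{h}P_{hijk}=(B^{h}K_{h})\,C_{ijk}$. Next, independently of the $P2$-like hypothesis, I would repeat the computation of \eqref{e:2}--\eqref{e:4}: contracting \eqref{e:e14} by $B^{h}$ annihilates the terms $B^{h}C_{hjk|i}$, $B^{h}C_{hjr}C^{r}_{ik|0}$ and $B^{h}C_{ijr}C^{r}_{hk|0}$ by \eqref{e:e25} (using \eqref{e:e13} and \eqref{e:e17} for the index-raised and covariantly differentiated Cartan tensor), leaving $B^{h}P_{hijk}=B^{h}C_{ijk|h}$, and then the Ricci identity \eqref{e:e11} gives $B^{h}C_{ijk|h}=-C_{ijk}$, so $B^{h}P_{hijk}=-C_{ijk}$.

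Equating the two expressions yields $(B^{h}K_{h})\,C_{ijk}=-C_{ijk}$, i.e. $(1+B^{h}K_{h})\,C_{ijk}=0$; since $1+B^{h}K_{h}\neq0$ by hypothesis, $C_{ijk}=0$, the Cartan tensor vanishes identically, and $(M,F)$ is Riemannian. The step I expect to be the real obstacle is the second one --- justifying $B^{h}P_{hijk}=-C_{ijk}$ for a vector field that is only semi-concurrent rather than fully concurrent. There one must check that the auxiliary identities \eqref{e:e13}, \eqref{e:e17} and, in particular, the Ricci identities \eqref{e:e10}--\eqref{e:e12}, which in the text are attached to the full $C$-condition \eqref{e:e23}, survive (or can be re-derived) under \eqref{e:e25} alone. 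Granting that, the remainder of the proof is a single contraction plus the non-degeneracy assumption on $1+B^{h}K_{h}$.
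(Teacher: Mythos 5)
Your proposal follows essentially the same route as the paper: contract the $P2$-like relation \eqref{e:e41} by $B^{h}$ (killing the $K_{i}C_{kjh}$ term via the $SC$-condition), separately derive $B^{h}P_{hijk}=B^{h}C_{ijk|h}=-C_{ijk}$ from \eqref{e:e43} and the Ricci identity, and conclude $(1+B^{h}K_{h})C_{ijk}=0$. The caveat you raise --- that the Ricci identities \eqref{e:e10}--\eqref{e:e12} are stated in the text for concurrent rather than merely semi-concurrent fields --- is a genuine point, but the paper's own proof makes exactly the same appeal without further justification, so your argument matches the published one.
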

\begin{proof}
A $P2$-like Finsler space is characterized by
\begin{equation}\label{e:e41}
P_{hijk} = K_{h}C_{ijk} - K_{i}C_{kjh},
\end{equation}
where $K^{h}$ is a covariant vector field.
Consider
\begin{equation}\label{e:e43}
P_{hijk} = C_{ijk|h} - C_{hjk|i} + C_{hjr}C^{r}_{ik|0} - C_{ijr}C^{r}_{hk|0}.
\end{equation}
Contracting (\ref{e:e43}) by $B^{h}$, we obtain
\begin{equation}\label{e:e44}
B^{h}P_{hijk} = B^{h}C_{ijk|h}.
\end{equation}
From Ricci identity, (\ref{e:e44}) yields
\begin{equation}\label{e:e441}
B^{h}C_{ijk|h} = -C_{ijk},
\end{equation}
On contracting (\ref{e:e41}) by $B^{h}$ and by using (\ref{e:e441}), we conclude that $C_{ijk} = 0$ if $1 + B^{h}K_{h} \neq 0$, which implies that the space is Riemannian.
\end{proof}

\begin{thm}
	A P-reducible Finsler space $F^{n}$ admitting a semi-concurrent vector field is a Landsberg space provided $B^{2}F^{2}-B^{2}_{0} \neq 0$.
\end{thm}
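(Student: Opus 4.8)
The plan is to follow the same pattern as in the proofs above, but now for the Landsberg tensor in place of the Cartan tensor: contract the defining relation of a $P$-reducible space by $B^iB^j$, use the $SC$-condition to annihilate the left-hand side, and then peel off the trace of the Landsberg tensor. Recall that $P$-reducibility \eqref{e:e20} concerns the Landsberg tensor $P_{ijk}:=C_{ijk|0}$, which is totally symmetric since $C_{ijk}$ is, and that $F^{n}$ is a Landsberg space precisely when $P_{ijk}=0$; because of the form \eqref{e:e20}, this will follow as soon as we show that the trace $P_{k}$ vanishes.

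The first step is to prove $B^{i}B^{j}P_{ijk}=0$ for the semi-concurrent field $B^{i}$. By the total symmetry of the Cartan tensor, the $SC$-condition \eqref{e:e25} is equivalent to $B^{i}C_{ijk}=0$ with the contraction taken on any one of the three indices; in particular $B^{i}B^{j}C_{ijk}=0$ identically. Applying the horizontal covariant derivative along the supporting element (the operator $|0$) and using the Leibniz rule, the two terms in which the derivative acts on a factor $B$ still contain a single-index contraction of $C_{ijk}$ with $B$ and hence vanish by \eqref{e:e25}; what survives is $B^{i}B^{j}C_{ijk|0}=0$, that is, $B^{i}B^{j}P_{ijk}=0$. (Alternatively, contracting the curvature identity \eqref{e:e43} by $y^{h}$ identifies $P_{ijk}$ with $y^{h}P_{hijk}$, after which one may argue exactly as in the two preceding theorems.)

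Now contract the $P$-reducibility relation \eqref{e:e20} by $B^{i}B^{j}$. Write $v_{k}:=h_{ik}B^{i}$, $m:=h_{ij}B^{i}B^{j}$ and $p:=B^{i}P_{i}$; since $h_{ij}=g_{ij}-l_{i}l_{j}$ with $l_{i}=y_{i}/F$, one has $m=B^{2}-(B_{i}y^{i})^{2}/F^{2}=(B^{2}F^{2}-B^{2}_{0})/F^{2}$, so the hypothesis $B^{2}F^{2}-B^{2}_{0}\neq0$ is exactly $m\neq0$. The contracted relation becomes
\[
0 \;=\; B^{i}B^{j}P_{ijk} \;=\; \frac{1}{n+1}\bigl(mP_{k}+2p\,v_{k}\bigr).
\]
Contracting this once more with $B^{k}$ and using $B^{k}v_{k}=m$ and $B^{k}P_{k}=p$ gives $3mp=0$; since $m\neq0$, we obtain $p=0$, and substituting back yields $mP_{k}=0$, hence $P_{k}=0$. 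By \eqref{e:e20} this forces $P_{ijk}=0$, so $F^{n}$ is a Landsberg space.

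The delicate point is the first step: it is essential to contract $B^{i}$ on \emph{two} of the three indices before differentiating, so that \emph{both} terms in which the derivative hits a $B$ are annihilated by the $SC$-condition; a single contraction would instead leave the term $B^{i}{}_{|0}C_{ijk}$, which cannot be disposed of for a field that is only semi-concurrent (rather than concurrent). Everything else is routine: the identification $m=(B^{2}F^{2}-B^{2}_{0})/F^{2}$ and the short linear-algebra step producing $3mp=0$.
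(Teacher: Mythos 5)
Your proof is correct, and its skeleton is the same as the paper's: contract the $P$-reducibility relation with $B^{i}B^{j}$, identify $h_{ij}B^{i}B^{j}=(B^{2}F^{2}-B^{2}_{0})/F^{2}$, and conclude $P_{k}=0$, hence $P_{ijk}=0$. Where you genuinely diverge is in how the left-hand side $B^{i}B^{j}P_{ijk}$ is killed and how the cross terms are removed. The paper simply invokes ``$P_{ijk}=C_{ijk}$'' (really an appeal to $P_{i}=C_{i}$ and the $SC$-condition) to make the left side vanish and silently drops the two terms involving $B^{i}P_{i}$, which it can do because $B^{i}P_{i}=B^{i}C_{i}=g^{jk}B^{i}C_{ijk}=0$. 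You instead prove $B^{i}B^{j}P_{ijk}=B^{i}B^{j}C_{ijk|0}=0$ by differentiating the identity $B^{i}B^{j}C_{ijk}=0$ along the supporting element and observing that the Leibniz terms each retain a single-index contraction of $C$ with $B$; and you eliminate $p=B^{i}P_{i}$ by a further contraction with $B^{k}$ rather than by the identity $P_{i}=C_{i}$. Your route is self-contained and more careful --- in particular it does not rely on the loosely stated relation between $P_{ijk}$ and $C_{ijk}$ --- at the cost of a slightly longer linear-algebra step; the paper's route is shorter but leans on conventions ($P_{i}=C_{i}$) that it states only in the definition. Both arguments are valid, and your observation that the double contraction with $B$ is what makes the Leibniz terms disappear for a merely semi-concurrent field is exactly the right point to flag.
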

	\begin{proof}
		The torsion tensor $P_{ijk}$ is given by
		\begin{equation}\label{e:e48}
		P_{ijk} = h_{ij}P_{k} + h_{jk}P_{i} + h_{ki}P_{j}.
		\end{equation}
		Contracting (\ref{e:e48}) by $B^{i}B^{j}$ and since $P_{ijk} = C_{ijk}$, we obtain
		\begin{equation}\label{e:e49}
		B^{i}B^{j}h_{ij}C_{k}\,=\,0,
		\end{equation}
		which implies that
        \begin{equation}\label{e:e481}
		(B^{2}F^{2} - B^{2}_{0})C_{k} = 0.
		\end{equation}
		From (\ref{e:e481}), we deduce that $C_{k} = 0$, that is, $P_{k} = 0$ if $B^{2}F^{2} - B^{2}_{0}\neq0$, which implies that $F^{n}$ is a Landsberg space.
	\end{proof}
The $T$-tensor is defined as follows \cite{e11}:
\begin{equation}\label{e:e482}
T_{hijk} = FC_{hij}|_{k} + C_{hij}l_{k} + C_{hik}l_{j} + C_{hjk}l_{i} + C_{ijk}l_{h}
\end{equation}
If $(M, F)$ is Riemannian, then the $T$-tensor vanishes. If $(M, F)$ satisfies the $CC$-condition, then the converse part of the result holds, which is shown in the following theorem.
\begin{thm}
	A Finsler manifold satisfying the $CC$-condition is Riemannian if and only if the $T$-tensor $T_{hijk}$ vanishes.
\end{thm}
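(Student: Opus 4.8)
For the forward implication there is nothing to do: if $(M,F)$ is Riemannian then $C_{ijk}\equiv 0$, so every term of $T_{hijk}$ in \eqref{e:e482} vanishes identically. So the real content is the converse, and my plan is to feed the $CC$-condition into the vanishing of the $T$-tensor by a single contraction.

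Assuming the $CC$-condition \eqref{e:e24} and $T_{hijk}=0$, I would first rewrite \eqref{e:e24} with a raised index as $\sigma^{h}C_{hij}=0$, where $\sigma^{h}:=g^{hm}\sigma_{m}$; by total symmetry of the Cartan tensor this equally reads $\sigma^{h}C_{hik}=\sigma^{h}C_{hjk}=0$. Then I would contract \eqref{e:e482} with $\sigma^{h}$: the block $\sigma^{h}\bigl(C_{hij}l_{k}+C_{hik}l_{j}+C_{hjk}l_{i}+C_{ijk}l_{h}\bigr)$ collapses to $C_{ijk}\,\sigma^{h}l_{h}$, and $\sigma^{h}l_{h}=\sigma_{m}l^{m}=\sigma_{m}y^{m}/F=:\sigma_{0}/F$, so the contraction of $T_{hijk}=0$ reduces to
\[
F\,\sigma^{h}C_{hij}|_{k}+\frac{\sigma_{0}}{F}\,C_{ijk}=0 .
\]

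The next and decisive step is to show that $\sigma^{h}C_{hij}|_{k}=0$. Here I would note that, since $\dot{\partial}_{k}\sigma_{m}=0$, the vertical covariant derivative of $\sigma_{m}$ under the Cartan connection is $\sigma_{m}|_{k}=-\sigma_{r}C^{r}_{mk}$, which is zero by \eqref{e:e24}; combined with metric compatibility ($g^{hm}|_{k}=0$) this gives $(\sigma^{h})|_{k}=(g^{hm})|_{k}\sigma_{m}+g^{hm}\sigma_{m}|_{k}=0$. Vertically differentiating $\sigma^{h}C_{hij}=0$ then yields $\sigma^{h}C_{hij}|_{k}=-(\sigma^{h})|_{k}C_{hij}=0$. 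Putting this back into the displayed identity gives $\sigma_{0}\,C_{ijk}=0$, hence $C_{ijk}=0$ wherever $\sigma_{0}=\sigma_{m}y^{m}\neq 0$; by continuity of $C_{ijk}$ in $y$ this forces $C_{ijk}\equiv 0$, i.e. $(M,F)$ is Riemannian.

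I expect the third step --- establishing $\sigma^{h}C_{hij}|_{k}=0$ --- to be the main obstacle, since it is the only place where the $CC$-condition does genuine work beyond annihilating the algebraic $l$-terms, and it hinges on handling the $y$-dependence of $\sigma^{h}$ correctly together with metric compatibility of the Cartan connection. I would also be careful to record the nondegeneracy used in passing from $\sigma_{0}C_{ijk}=0$ to $C_{ijk}\equiv 0$ --- in effect that the conformal change is genuinely non-constant, so that $\{\sigma_{0}\neq 0\}$ is dense --- which here plays the same role as the hypotheses $1+B^{h}K_{h}\neq 0$ and $B^{2}F^{2}-B^{2}_{0}\neq 0$ in Theorems 3--5.
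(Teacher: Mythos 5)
Your proposal is correct and follows essentially the same route as the paper: contract the vanishing $T$-tensor with $\sigma^{h}=g^{hm}\sigma_{m}$, use $\sigma_{m}|_{k}=0$ (from $\dot{\partial}_{k}\sigma_{m}=0$ and the $CC$-condition) together with metric compatibility to kill the derivative term, and read off $\frac{\sigma_{0}}{F}C_{ijk}=0$. If anything, you spell out more carefully than the paper why $\sigma^{h}C_{hij}|_{k}=0$ and why the nondegeneracy assumption $\sigma_{0}\neq 0$ is needed, both of which the paper's proof leaves implicit.
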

\begin{proof}
	First, we prove that the vertical covariant derivative of $\sigma_{h}$ vanishes identically. Since $\sigma_{h}=\sigma_{h}(x)$, we have
	\begin{equation*}
	\sigma_{h}|_{k} = \dot{\partial}_{k}\sigma_{h} + \sigma_{i}C^{i}_{hk} = 0.
	\end{equation*}
	Let the $T$-tensor vanish, then from (\ref{e:e482}), we have
	\begin{equation*}
	FC_{hij}|_{k} + C_{hij}l_{k} + C_{hik}l_{j} + C_{hjk}l_{i} + C_{ijk}l_{h} = 0.
	\end{equation*}
	Contracting by $\sigma_{m}$, and taking into account that $\sigma_{m}|_{k} = 0$, we get
	\begin{equation*}
	\sigma_{m}C_{hij}l_{k} + \sigma_{m}C_{hik}l_{j} + \sigma_{m}C_{hjk}l_{i} + \sigma_{m}C_{ijk}l_{h} = 0.
	\end{equation*}
	Again contracting by $g^{mh}$, we obtain
\begin{equation*}
\frac{\sigma_{0}}{F}C_{ijk} = 0.
\end{equation*}
Since $\sigma_{0} \neq 0$, it follows that $C_{ijk} = 0$.
\end{proof}
By defining the tensor $T_{ij}$,
\begin{equation*}
T_{ij} := T_{ijhk}g^{hk} = FC_{i}|_{j} + l_{i}C_{j} + l_{j}C_{i},
\end{equation*}
we can state the following result.

\begin{cor}
	A Finsler manifold satisfying the $CC$-condition is Riemannian if and only if the tensor $T_{ij}$ vanishes.
\end{cor}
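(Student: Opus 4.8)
The plan is to deduce the corollary from the preceding theorem by rerunning its proof with the trace $T_{ij}$ in place of the full $T$-tensor $T_{hijk}$. The forward implication requires nothing new: if $(M,F)$ is Riemannian then $C_{ijk}=0$, hence $C_i=g^{jk}C_{ijk}=0$ and $C_i|_j=0$, so every term of $T_{ij}=FC_i|_j+l_iC_j+l_jC_i$ vanishes. (Equivalently, $T_{hijk}=0$ by the preceding theorem and $T_{ij}=g^{hk}T_{ijhk}$.)

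For the converse, assume the $CC$-condition $\sigma_hC^h_{ij}=0$ and $T_{ij}=0$. I would first record the two properties of $\sigma$ already used in the proof of the preceding theorem: since $\sigma_h=\sigma_h(x)$ one has $\sigma_h|_k=\dot{\partial}_k\sigma_h+\sigma_iC^i_{hk}=\sigma_iC^i_{hk}=0$, and hence $\sigma^h|_k=0$ by metric compatibility of the Cartan connection; moreover $\sigma^hC_h=g^{jk}\sigma^hC_{hjk}=g^{jk}(\sigma_aC^a_{jk})=0$. Applying the Leibniz rule for $|_j$ to the identically vanishing function $\sigma^hC_h$ gives $0=(\sigma^hC_h)|_j=(\sigma^h|_j)C_h+\sigma^h(C_h|_j)=\sigma^h(C_h|_j)$. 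Now contract the hypothesis $T_{ij}=0$ with $\sigma^i$ (that is, multiply by $\sigma_m$ and contract with $g^{mi}$, exactly as in that proof). Using $\sigma^il_i=\sigma_0/F$, $\sigma^iC_i=0$ and $\sigma^i(C_i|_j)=0$, all terms cancel but one, leaving $\frac{\sigma_0}{F}C_j=0$; since $\sigma_0\neq 0$ we conclude $C_j=0$.

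The remaining step — the only place the argument departs from the preceding theorem, which reached $C_{ijk}=0$ directly — is to pass from the vanishing of the mean Cartan torsion $C_j$ to $C_{ijk}=0$, i.e. to the conclusion that $(M,F)$ is Riemannian; this is Deicke's theorem. (One may alternatively feed $C_j=0$ back into $T_{ij}=0$ to also obtain $C_i|_j=0$, but there appears to be no way to avoid Deicke's theorem when deducing $C_{ijk}=0$ from $C_i=0$.) I expect this appeal to Deicke's theorem to be the only delicate point; the contraction bookkeeping, once the identities $\sigma^hC_h=0$ and $\sigma^h(C_h|_j)=0$ are established, is routine and parallels the proof of the preceding theorem.
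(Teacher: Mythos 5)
Your proposal is correct, but it is worth noting that the paper itself supplies no proof of this corollary: it is stated immediately after the definition of $T_{ij}$, with the implicit suggestion that it follows from the preceding theorem. That suggestion is only half right. The forward direction and the implication $T_{hijk}=0\Rightarrow T_{ij}=0$ are indeed immediate, but the converse of the corollary assumes the strictly weaker hypothesis $T_{ij}=0$, so it does not follow formally from the theorem. Your rerun of the contraction argument is the right repair: the identities $\sigma_h|_k=0$, $\sigma^hC_h=0$ and hence $\sigma^h(C_h|_j)=0$ are all justified exactly as you say (the last using vertical metric compatibility of the Cartan connection), and contracting $T_{ij}=0$ with $\sigma^i$ leaves only $\frac{\sigma_0}{F}C_j=0$, giving $C_j=0$ under the paper's standing assumption $\sigma_0\neq 0$. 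You are also right to flag the final step as the one genuinely new ingredient: passing from $C_j=0$ to $C_{ijk}=0$ requires Deicke's theorem, which is legitimate here because the paper's Definition~1 assumes the metric tensor is positive-definite, but which the paper never invokes or cites. So your proof is not merely parallel to the paper's argument for the theorem; it identifies and fills a real gap in the deduction of the corollary as the paper presents it.
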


\section{Conclusions}
In Finsler geometry, special Finsler spaces play a significant role. In this context, we have studied semi-concurrent vector fields on some special Finsler spaces and obtained interesting results. We have shown that a quasi-$C$-reducible Finsler space, $C3$-like Finsler space, $C^{h}$-recurrent Finsler space, and $P2$-like Finsler space admitting a semi-concurrent vector field become Riemannian. And a $P$-reducible Finsler space satisfying the $SC$-condition becomes a Landsberg space. Further, we have obtained the necessary and sufficient condition for a Finsler space satisfying the $C$-conformal condition to become Riemannian.

\end{document}